\documentclass[11pt]{amsart}
\usepackage[dvips]{graphicx}
\usepackage{amsmath,graphics}
\usepackage{tikz}
\usepackage{amsfonts,amssymb}
\usepackage{xypic}
\usepackage{comment}
\usepackage{color}
\usepackage{hyperref}
\usepackage{cleveref}
\usepackage{thmtools}
\usepackage{thm-restate}

\specialcomment{proofc}{}{}
\includecomment{proofc}
\theoremstyle{plain}
\newtheorem*{theorem*}{Theorem}
\newtheorem*{lemma*} {Lemma}
\newtheorem*{corollary*} {Corollary}
\newtheorem*{proposition*}{Proposition}
\newtheorem*{conjecture*}{Conjecture}
\newtheorem{theorem}{Theorem}[section]
\newtheorem{lemma}[theorem]{Lemma}
\newtheorem*{theorem1*}{Theorem 1}
\newtheorem*{theorem2*}{Theorem 2}
\newtheorem*{theorem3*}{Theorem 3}

\newtheorem{proposition}[theorem]{Proposition}
\newtheorem{conjecture}[theorem]{Conjecture}
\newtheorem{question}[theorem]{Question}

\theoremstyle{remark}
\newtheorem*{remark}{Remark}
\newtheorem*{remarks}{Remarks}
\newtheorem*{definition}{Definition}

\newtheorem*{example*}{Example}

\theoremstyle{definition}

\textwidth 6in    
\oddsidemargin.25in    
\evensidemargin.25in     
\marginparwidth=.85in
\def\op{\operatorname}

\def\G{\Gamma}

 \def\Q{\mathbb{Q}}  \def\Z{\mathbb{Z}}   
\def\N{\mathbb{N}}    
 \def\a{\alpha}   \def\bp{\begin{pmatrix}}
 \def\ep{\end{pmatrix}} \def\bn{\begin{enumerate}} 
   \def\en{\end{enumerate}}
\def\ba{\begin{array}} \def\ea{\end{array}}  
 \def\S{\Sigma}  \def\a{\alpha}  
 \def\Aut{\operatorname{Aut}} \def\Out{\operatorname{Out}}  
  
\def\ker{\mbox{Ker}}\def\be{\begin{equation}} \def\ee{\end{equation}}

\def\GG{\widetilde G}
\def\KK{\widetilde K}
\def\Ga{\widetilde \Gamma}
\def\XX{\widetilde X}

\def\op{\operatorname}

\def\modg{\op{Mod}_g}
\def\mod1{\op{Mod}_{g}^{1}}

\def\Mod{\op{Mod}}

\begin{document}
\title[Virtual algebraic fibrations of surface-by-surface groups]{Virtual algebraic fibrations of surface-by-surface groups and orbits of the mapping class group }

\author{Robert Kropholler, Stefano Vidussi and Genevieve Walsh}
\date{\today}

\begin{abstract} We show that a conjecture of Putman--Wieland, which posits the nonexistence of finite orbits for higher Prym representations of the mapping class group, is equivalent to the existence of surface-by-surface and surface-by-free groups which do not virtually algebraically fiber. While the question about the existence of such groups remains open, we will show that there exist free-by-free and free-by-surface groups which do not algebraically fiber (hence fail to be virtually RFRS).

\end{abstract}

\maketitle

\section{Introduction}
In this paper we will be concerned with the study of homological and group-theoretical properties of group extensions of the form \begin{equation} \label{eq:ses}  1 \longrightarrow K \longrightarrow  G \stackrel{f}{\longrightarrow} \G \longrightarrow 1, \end{equation} where $K$ and $\G$, the {\em fiber} and {\em base} groups, are fundamental groups of closed, compact, orientable surfaces of genus at least $2$, or finitely generated nonabelian free groups. The study of this type of extension has quite a long story, originating in the realm of free-by-free groups and, in the broader context that includes surface groups, in several papers of F.E.A. Johnson, see e.g. \cite{Jo99} and references therein.

The original motivation of our interest stems from a topological framework. 
Let $X$ be a surface bundle with fiber $F$ over a surface $B$, both having genus at least $2$. Then its fundamental group $G = \pi_1(X)$ is an extension as in \Cref{eq:ses} with $K := \pi_1(F)$, $\Gamma := \pi_1(B)$ and where $f \colon G \to \G$  is the map induced on the fundamental groups (after picking a basepoint) by the fibration $X \rightarrow B$.  This sequence is the nontrivial part of the long exact sequence of the homotopy groups for the fibration. In this case, we will refer to $G$ as a {\it surface-by-surface group}. In the other cases, while the extensions are not the fundamental group of a closed aspherical $4$-manifold, they share similarities with surface-by-surface groups. Note that when the base $\G$ is the free group $F_n$, $G$ can always be thought of as a semidirect product $K \rtimes F_n$, while this may fail in general.

We will focus on those extensions with the property that the induced map on homology with rational coefficients $f_* \colon  H_1(G;\Q) \to H_1(\G;\Q)$ is an isomorphism.  (This map is always surjective.) This has been referred to by saying that the sequence of \Cref{eq:ses} has no {\em excessive homology} (\cite{KW19}). This is a property that depends on $G$ alone, and not on the choice of the extension. 

Given any group $G$ as in \Cref{eq:ses}, we can ask whether it admits a finite-index subgroup with excessive homology.   By the work of \cite{FV19,KW19} this is equivalent to the property that $G$ virtually algebraically fibers, namely a finite index subgroup of $G$ admits an epimorphism to $\Z$ with finitely generated kernel. Our main aim is to show that in the case where $K = \pi_1(F)$ is a surface group there is a natural relation between this question and a conjecture by Putman and Wieland  \cite[Conjecture 1.2]{PW13}. We refer the reader to Section \ref{sec:prelim} and the original source for more detail. The case labeled as NFO($g,0,0$) of that conjecture posits that the action  of the mapping class group $\Mod^{1}_{g}$ of the surface $F$ (where $g$ is the genus of $F$) on the rational homology of finite-index characteristic covers of $F$ (sometimes referred to as {\em higher Prym representation}) has no finite orbits. We will show the following.

\begin{restatable*}{theorem}{mainthm}\label{thm:main}
	For every $g \geq 2$  the Putman--Wieland conjecture NFO($g,0,0$) holds if and only if there exists a surface-by-surface or a surface-by-free group $G$ with fiber of genus $g$ and no virtual excessive homology or, equivalently, that is not virtually algebraically fibered. 
\end{restatable*} 

Note that the surface-by-surface and surface-by-free groups mentioned in the statement, if they exist, will fail to be virtually RFRS (residually finite rationally solvable) in light of \cite{K20}.

Assuming the Putman-Wieland conjecture, this theorem would differentiate the behavior of $3$- and $4$-dimensional surface bundles. 

Markovi\'c \cite{Mark22} has shown that the conjecture fails when the genus of the surface is $2$. Combining this with Theorem \ref{thm:main}, we have the following, that gives a partial answer to a question of Hillman (\cite[Section 11, Question 4]{Hi15}):

\begin{restatable*}{corollary}{corgen}\label{cor:g2}
Let $X$ be a surface bundle with fiber $F$ of genus $2$ and base $B$ of genus at least $2$. Then its fundamental group $G = \pi_1(X)$ is virtually algebraically fibered and it is incoherent, namely it admits finitely generated subgroups which are not finitely presented.
\end{restatable*}

The proof of Theorem \ref{thm:main} shows that the base of $G$ can be assumed to be the fundamental group of a surface of genus $2$, or the free group $F_2$. The fact that these bases are optimal is not quite obvious, see Proposition \ref{pr:torus}. Additionally, the corresponding surface bundle over a surface $X$ can be assumed to have signature zero.

The ``only if" part of Theorem \ref{thm:main} uses an epimorphism from the base of the extension to the mapping class group $\Mod_g$, so that we can make the construction of the groups deciding the conjecture NFO($g,0,0$) very explicit, especially in the case where $g \geq 3$. We illustrate this in the surface-by-free case: Let $F$ be a surface of genus $g \geq 3$; this admits a cyclic automorphism of order $4g+2$, and denote by $\Pi$ the fundamental group of the mapping torus of this automorphism. (This is a Seifert--fibered manifold which is finitely covered by a product.) Any two such automorphisms are conjugate in $\Mod_g$, so that $\Pi$ is uniquely determined as a group. By \cite{Kor05} $\Mod_g$ can be generated by two such automorphisms, related by conjugation by the automorphism $\delta \colon K \to K$ induced by a Dehn twist along a nonseparating curve. (See Section \ref{sec:mainresults} for references and more details), 

\begin{restatable*}{proposition}{simplenofiber}\label{prop:simplenofiber}
	Let $\upsilon \colon K \to K$ be the generator of a cyclic subgroup of order $4g+2$ of $\modg$ for $g \geq 3$, and denote by $\Pi$ the corresponding mapping torus. Let $\Pi *_{\delta} \Pi$ be the amalgamated free product determined by the automorphism $\delta \colon K \to K$. Then the Putman--Wieland conjecture NFO($g,0,0$) holds if and only if the surface-by-$F_2$ group $\Pi *_{\delta} \Pi$ fails to virtually algebraically fiber.
\end{restatable*}

The group $\Pi \ast_{\delta} \Pi$ surjects onto $\mod1$. In a sense, the latter is ``one Dehn twist away" from being the product $K \times \modg$. 

In analogy with Theorem \ref{thm:main} one may ask about the existence of free-by-free or free-by-surface groups with no virtual excessive homology, or equivalently which do not virtually algebraically fiber. In this realm we can reach in most cases an affirmative answer, which naturally extends also to the case where the fiber group is free abelian.

\begin{restatable*}{theorem}{freebyfreenv} \label{thm:freebyfreenv} For each $n \geq 2$,  and each $m \geq 4$ (respectively $m \geq 2$) there exist groups of the form $ F_{m} \rtimes \G$ (respectively $\Z^{m} \rtimes \G$), where $\G$ is a copy of $F_n$ or the fundamental group of a surface of genus $n$, with no virtual excessive homology or, equivalently, that are not virtually algebraically fibered. Therefore these groups are not virtually RFRS.
\end{restatable*}

Note, in contrast, that these groups are virtually residually $p$, hence virtually residually finite solvable, see Lemma \ref{lm:RFS}. The interest in RFRS groups stems from their role in various areas of group theory, including the study of $3$--manifold groups (\cite{Ag08}) and algebraic fibrations (\cite{K20}). Free groups are RFRS, and so are their direct products: it is not obvious how to decide if other extensions of free (or surface) groups are RFRS. Theorem \ref{thm:freebyfreenv} as well as the previous results, makes a step in that direction.

We point out that Theorem \ref{thm:freebyfreenv} has no implication on the suitable analog of the Putman-Wieland conjecture in the context of automorphisms of free groups, which is known to be true in light of \cite{HF16}.

This paper is organized as follows: In Section \ref{sec:prelim}, we give background on the homology of the extensions we are considering, and of their finite index subgroups, and state the Putman-Wieland conjecture. \Cref{thm:main} and  \Cref{prop:simplenofiber} are proven in Section \ref{sec:mainresults}, while Theorem \ref{thm:freebyfreenv} is proven in Section \ref{sec:fbf}.  

\subsection*{Acknowledgements:}
The first author was funded by the Deutsche Forschungsgemeinschaft (DFG, German Research Foundation) under Germany's Excellence Strategy EXC 2044--390685587, Mathematics M\"unster: Dynamics--Geometry--Structure. The second author was partially supported by the Simons Foundation Collaboration Grant For Mathematicians 524230. The third author was partially supported by National Science Foundation grant DMS - 2005353. The authors wish to thank the referees for their excellent reports, which greatly improved the presentation.

\section{preliminaries} \label{sec:prelim} 

It will be useful in what follows to tie the sequence in \Cref{eq:ses} with the monodromy representation determined by the extension (\ref{eq:ses}). Namely, we have
\begin{equation} \label{eq:bir} 
\xymatrix{
K  \ar@{>->}[r]\ar[d]^{\cong} &
G \ar[d]^{\zeta} \ar@{->>}[r]^{f}& \G \ar[d]^{\eta}\\
K \ar@{^{(}->}[r]& \Aut(K) \ar@{->>}[r]^{p}  & \Out(K). }
\end{equation}
Here $\eta$ characterizes the action of $\Gamma$ on $K$, well-defined up to conjugation. The map $K \to \Aut(K)$ is given by the conjugation action, and it is injective as $K$ has trivial center.  This allows us to identify $G$ as the pullback 
\[ G \cong \{  (\psi,\gamma) \in \Aut(K) \times \G \mid p(\psi) = \eta(\gamma) \}, \] with the group structure obtained by restriction of that on $\Aut(K) \times \G$.  The fibration map $f$ is induced by projection onto the second factor, and the fiber subgroup is given by the normal subgroup \[  K \times \{1_{\G}\} \unlhd G \leq  \Aut(K) \times \G. \]

With this identification, the conjugation action of  $G$ on its normal subgroup $K$ can be written in terms of the conjugation action of $\Aut(K) \times \G$ on  $K \times \{1_{\G}\}$. Note that when $K$ is a surface group of genus $g>1$, we will be interested in the case where the monodromy representation $\eta \colon \G \to \Out(K)$ has values in $\modg \unlhd \Out(K)$, and $\zeta \colon G \to \Aut(K)$ has values in $\modg^1 \unlhd \Aut(K)$ (with both modular groups subgroups of index $2$ determined by orientation-preserving homeomorphisms of a surface). This condition, which is equivalent to the fact that the corresponding surface bundle is oriented, is not too restrictive and can be achieved by passing to an index-two subgroup of the base of the extension. In such case, the bottom row of Equation (\ref{eq:bir}) can be interpreted as Birman's short exact sequence.

\subsection{Coinvariants and homology} We wish to understand the homology of $G$ in terms of that of $K$ and $\G$. If $G$ is a group and $M$ is a $G$-module, the {\it coinvariants of $M$}, denoted $M_G$, is the quotient of $M$ obtained by taking the quotient generated by elements of the form $gm-m$, for all $g \in G$, $m \in M$. 
The {\it invariants of $M$}, denoted $M^G$, are the largest submodule of $M$ on which $G$ acts trivially. 

\begin{definition} Let $1 \to K \to G \to \Gamma \to 1$ be an extension. The {\it excessive homology} of this extension is the kernel of the map $H_1(G; \Q) \rightarrow H_1(\Gamma;\Q)$.
\end{definition} 

In the cases that we will be interested in, the excessive homology can be conveniently expressed in terms of (co)invariant homology. Specifically, we have the following Lemma (whose first part applies, in generality, for any semidirect product.)

\begin{lemma} \label{lem:ehsd} Let $G$ be an extension as in Equation (\ref{eq:ses}); assume that either
\begin{enumerate}
    \item $G$ can be written as a semidirect product $G = K \rtimes \G$; or
    \item $K$ is a surface group (of genus at least $2$).
\end{enumerate} 
Then the excessive homology of $G$ is $H_1(K;\Q)_G$. Furthermore, in the second case, it is isomorphic to $H_1(K; \Q)^G$.\end{lemma}

\begin{proof} The Lyndon--Hochschild--Serre spectral sequence associated to  \Cref{eq:ses} \cite[problem 6, pg. 47]{Br94}, gives the following 5-term exact sequence for the homology with rational coefficients: \begin{equation} \label{eq:five} H_2(G;\Q) \stackrel{f}{\longrightarrow} H_2(\G;\Q) \longrightarrow H_1(K;\Q)_{G} \longrightarrow H_1(G;\Q) \stackrel{f}{\longrightarrow} H_1(\G;\Q) \longrightarrow 0.  \end{equation} When $G$ is a semidirect product, the extension splits, hence all maps $f \colon H_i(G;\Q) \to H_{i}(\G;\Q)$ admit a right-inverse, hence are surjective, and the statement follows. When $K$ is a surface group, and $\G$ is free, then the extension is split again and the previous argument applies. Surface-by-surface groups, in contrast, do not always split.  However, as we assumed that the genus of the fiber is at least $2$, the map $f \colon H_2(G;\Q) \longrightarrow H_2(\G;\Q)$ in the sequence in Equation (\ref{eq:five}) is surjective (see e.g. \cite[Proposition 3.1]{Mo87}), so the kernel of $f \colon H_1(G;\Q) \to H_1(\G;\Q)$ is given again by $H_1(K;\Q)_G$.

Finally, when $K$ is a surface group, $G$ acts on $H_{1}(K;\Z)$ preserving its algebraic intersection form, which extends uniquely to a $G$-invariant symplectic structure on $H_1(K;\Q)$ (see e.g. \cite[Section 6.1.2]{FM12}). The symplectic structure on $H_{1}(K;\Q)$, being non-degenerate, induces an isomorphism of $G$-spaces between $H_{1}(K;\Q)$ and its dual $Hom(H_1(K;\Q),\Q) \cong H^{1}(K;\Q)$. Consequently, $H^{1}(K;\Q)^{G} \cong H_{1}(K;\Q)^{G}$.
But $H^{1}(K;\Q)^{G}$ is dual to $H_{1}(K;\Q)_{G}$ (see e.g. \cite[Lemma 2.1]{PW13}), hence the latter is isomorphic (as vector spaces) to $H^{1}(K;\Q)^{G}$ so the last part of the statement follows.
\end{proof}

It is not too hard to verify that the existence of excessive homology is preserved by passing to finite index subgroups:

\begin{lemma} \label{lem:fi} Let $1 \to K \to G \to \Gamma \to 1$ be an extension as in Equation (\ref{eq:ses}), and let $\GG \leq  G$ be a finite index subgroup. Then the excessive homology of the induced extension on $\GG$ surjects onto that of $G$.  \end{lemma}

\begin{proof} There exists a commutative diagram 
\begin{equation} \label{eq:finco} \xymatrix{
\KK = K \cap \GG \ar@{^{(}->}[r]\ar@{^{(}->}[d] &
\GG \ar@{^{(}->}[d] \ar@{->>}[r]& \Ga \ar@{^{(}->}[d]\\
K \ar@{^{(}->}[r]& G \ar@{->>}[r]  &
\G } \end{equation} 
with self-explaining notation. This entails the existence of a homomorphism 
\[ H_{1}(\KK;\Q) \longrightarrow H_{1}(K;\Q) \] which naturally commutes with the action of $\GG$. As $\KK \leq K$ is finite index, this homomorphism admits a right-inverse, a suitable multiple of the transfer map, see e.g. \cite[Chapter III]{Br94}). (This necessitates the use of rational coefficients.), and it is therefore surjective. It follows that we  have a composition of epimorphisms 
\[ H_{1}(\KK;\Q)_{\GG} \longrightarrow H_{1}(K;\Q)_{\GG} \longrightarrow H_{1}(K;\Q)_{G} \]  
which means that the excessive homology of the sequence $1 \to K \to G \to \G \to 1$ cannot decrease by passing to finite index subgroups of $G$. 
\end{proof}

Lemma \ref{lem:fi} implies that we can reduce the question of the existence of virtual excessive homology to normal finite index subgroups. Therefore we will focus on these subgroups, which are determined by an epimorphism $\a\colon G \to S$ onto a finite group $S$, completing the sequence in \Cref{eq:ses} to the following commutative diagram of short exact sequences: 
\begin{equation} \label{eq:diag} 
\xymatrix{
\KK \ar@{^{(}->}[r]\ar@{^{(}->}[d] &
\GG \ar@{^{(}->}[d] \ar@{->>}[r]& \Ga \ar@{^{(}->}[d]\\
K \ar@{->>}[d] \ar@{^{(}->}[r]& G \ar@{->>}[d] \ar@{->>}[r]  &
\G \ar@{->>}[d] \\
\a(K) \ar[r]
 \ar[r]& S \ar[r] &
S/ \a(K).} 
\end{equation} 
\begin{remark} In general, the action of $\GG$ on $H_1(\KK;\Q)$ does not extend to $G$, but it will do so when $\KK \unlhd K$ is characteristic: such action is induced by restriction of the $G$-action on $K$, which preserves $\KK$ as well as $[\KK,\KK]$ whenever $\KK$ is characteristic. Conversely, given any finite index normal subgroup $\KK \unlhd K$, it is known from \cite[Lemma 4.1]{Mo87} that there exists a finite index normal subgroup $\GG \unlhd G$ whose intersection with $K$ is the subgroup $\KK \unlhd K$. (\cite[Lemma 4.1]{Mo87} is stated only for surface groups, but the proof applies to free groups as well.)   Again, the action of $\GG$ on $H_1(\KK;\Q)$ may fail to extend to $G$, but will do so whenever $\KK$ is characteristic. As any finite index normal subgroup $\KK \unlhd K$ contains a finite index subgroup which is characteristic in $K$, we can further reduce the study of excessive homology to covers of $X$ which induce fiberwise characteristic subgroups of $K$.
\end{remark} 
\section{Virtual excessive homology and the Putman--Wieland conjecture} \label{sec:mainresults}

\subsection{The Putman-Wieland Conjecture}

In \cite{PW13}, the authors connect the study of the orbits of the mapping class group acting on the first homology of a surface and its finite covers to the classical conjecture that mapping class groups do not virtually surject $\Z$. We will be interested only in a particular case of their conjecture, so for sake of simplicity we will limit ourselves to that case, referring the reader to \cite{PW13} for the general case. Denote by $\mod1$ (respectively $\modg$) the mapping class group of a surface $\S_{g}^1$ with genus $g$ and $1$ puncture (respectively of a surface $\S_g$ with no punctures). When $\KK \unlhd K =  \pi_1(\S_{g})$ is a characteristic subgroup, the group $\mod1$ acts on $H_1(\KK;\Q)$. Putman and Wieland posit the following: 

\begin{conjecture} \cite[Conjecture 1.2]{PW13} Fix $g \geq 2$. Let $\KK \unlhd K = \pi_1(\Sigma_{g})$ be a finite-index characteristic
subgroup. Then for all nonzero vectors $v \in H_1(\KK;\Q)$ the $\mod1$
 -orbit of $v$ is infinite. \end{conjecture}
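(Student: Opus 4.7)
The plan is to proceed by contrapositive: assume some nonzero $v \in V_{\KK}$ has finite $\Mod^{p+1}_{g,n}$-orbit, and derive a contradiction against the well-known fact that mapping class groups of closed surfaces of genus at least $3$ have no virtual surjections onto $\Z$. First I would replace $v$ by the sum over its (finite) orbit to produce a nonzero vector $w \in V_{\KK}^{\Mod'}$ fixed by some finite-index subgroup $\Mod' \leq \Mod^{p+1}_{g,n}$; after shrinking $\Mod'$ further one arranges that $\KK$ remains characteristic in $K$, so that the $\Mod'$-action on $V_{\KK}$ is unambiguously defined.

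Second, I would use the commutative diagram in \Cref{eq:bir} together with the identification of excessive homology with $H_1(\KK;\Q)_G \cong H_1(\KK;\Q)^G$ proved in the Lemma of Section~\ref{sec:prelim} to translate the $\Mod'$-invariant vector $w$ into an extra rational homology class in any surface-by-$\G$ extension whose monodromy $\eta$ factors through $\Mod' \leq \mod1$. In particular, for $\G$ a suitable surface or free group with $\eta(\G) \subseteq \Mod'$, one would obtain a virtual algebraic fibration of the resulting surface-by-$\G$ group, and it is precisely the existence of such fibrations that one now wants to forbid in the $\modg$-world.

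Third, and this is the crux, one must rule out such fibrations intrinsically. For $g \geq 3$ the natural line of attack is an equivariant enhancement of the Harer/Powell result that $H^1(\modg;\Q) = 0$ together with its extensions to finite-index subgroups: one would show that for every finite-index $\Mod' \leq \Mod^{p+1}_{g,n}$ the invariants $V_{\KK}^{\Mod'}$ vanish, by expressing a putative invariant as a coboundary using explicit Dehn-twist or $(4g+2)$-rotation generators along the lines of Korkmaz's two-element generating set that appears later in \Cref{prop:simplenofiber}. For $g=2$ an entirely separate argument, presumably exploiting the hyperelliptic involution, would be required.

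The main obstacle is precisely this last step: the statement is a genuinely open conjecture, and what is missing is a uniform equivariant vanishing theorem for the $\Mod'$-invariants of $V_{\KK}$ as $\KK$ ranges over characteristic finite-index subgroups of $K$. Partial results in the literature handle specific families of covers under extra symmetry assumptions, but a general proof appears to require new input beyond the standard generation and rigidity techniques available for $\modg$; indeed, the content of \Cref{thm:main} and \Cref{thm:surfacebyfree} is precisely that this hard analytic/group-theoretic statement is exchangeable with the more geometric non-existence of virtual excessive homology for a single surface-by-surface (or surface-by-free) group.
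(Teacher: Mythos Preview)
The statement you are attempting to prove is not a theorem in the paper: it is the Putman--Wieland \emph{conjecture}, stated verbatim from \cite{PW13} as background. The paper makes no attempt to prove it, and indeed the entire content of \Cref{thm:main}, \Cref{thm:surfacebyfree}, and \Cref{prop:simplenofiber} is to reformulate this open conjecture as an equivalent statement about virtual excessive homology of surface-by-surface or surface-by-free groups. There is therefore no ``paper's own proof'' to compare your proposal against.

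You yourself recognize this in your third and fourth paragraphs: the crux step --- showing that $V_{\KK}^{\Mod'} = 0$ for every finite-index $\Mod' \leq \Mod^{p+1}_{g,n}$ and every characteristic $\KK$ --- is precisely the conjecture restated, and you correctly note that no such uniform equivariant vanishing theorem is known. Your first two paragraphs set up a sensible translation between finite orbits and virtual excessive homology (this is essentially the content of \Cref{lem:equi} and \Cref{lem:pw} in the paper), but that translation is an \emph{equivalence}, not a proof of either side. The appeal to Harer/Powell-type vanishing of $H^1(\modg;\Q)$ does not help: what is needed is vanishing of invariants in the twisted modules $V_{\KK}$, which is a strictly stronger and genuinely open statement. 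In short, your proposal is not a proof but an outline of why the conjecture is equivalent to other hard statements --- which is exactly what the paper itself establishes, without claiming to resolve the conjecture.
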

 
For fixed $g \geq 2$, the conjecture above will be referred to, for consistency with the notation of \cite{PW13}, as NFO($g,0,0$)

\subsection{Connections with virtually excessive homology}

Our first result connects virtual excessive homology of a surface-by-surface or surface-by-free group $G$ with the the orbits on the homology of characteristic subgroups of the fiber group $K$. 

\begin{lemma} \label{lem:equi} Let $G$ be a surface-by-surface or a surface-by-free group. Then the following two properties are equivalent:
\bn
\item $G$ has no virtually excessive homology;
\item for any finite index characteristic subgroup $\KK \unlhd K$, and any nonzero $v \in H_{1}(\KK;\Q)$, the orbit $G \cdot v \subset  H_{1}(\KK;\Q)$ is infinite.
\en \end{lemma}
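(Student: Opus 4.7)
The plan is to use the previous lemma together with the reduction in the Remark to reframe ``virtual excessive homology'' as the existence of a finite-index subgroup $\GG \leq_f G$, corresponding to a fiberwise characteristic cover with fiber group $\KK \unlhd_f K$, such that the invariant subspace $H_1(\KK;\Q)^{\GG}$ is nonzero. Once this reformulation is in place, the lemma reduces to the elementary observation that in a rational $G$-representation, a vector has finite $G$-orbit if and only if it is fixed by a finite-index subgroup.

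First I would record a standing observation: since $\KK$ is characteristic in $K$ and $K \unlhd G$, the conjugation action of $G$ preserves $\KK$ and so descends to a linear action on $H_1(\KK;\Q)$; moreover $\KK$ acts on itself by inner automorphisms, which act trivially on the abelianization, so $\KK$ lies in the kernel of the $G$-action on $H_1(\KK;\Q)$. In particular any $G$-stabilizer of a vector in $H_1(\KK;\Q)$ automatically contains $\KK$.

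For the implication (1)$\Rightarrow$(2), argued contrapositively: suppose $\KK \unlhd_f K$ is characteristic and $v \in H_1(\KK;\Q)$ is a nonzero vector with finite $G$-orbit. Then $H := \op{Stab}_G(v)$ is a finite-index subgroup of $G$ and contains $\KK$ by the standing observation. Using the Remark, choose a finite-index $\GG_0 \leq_f G$ coming from a fiberwise characteristic cover with $\GG_0 \cap K = \KK$, and set $\GG := \GG_0 \cap H$. Then $\GG$ has finite index in $G$, still satisfies $\GG \cap K = \KK$ (because $\KK \leq H$), and fixes $v$; hence in the extension $1 \to \KK \to \GG \to \GG/\KK \to 1$ the invariant subspace $H_1(\KK;\Q)^{\GG}$ is nonzero, so by the previous lemma $\GG$ has positive excessive homology, i.e.\ $G$ has virtual excessive homology.

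Conversely, for (2)$\Rightarrow$(1): if $G$ has virtual excessive homology, the Remark lets us take as a witness a finite-index $\GG \leq_f G$ whose fiber $\KK = \GG \cap K$ is characteristic in $K$; by the previous lemma $H_1(\KK;\Q)^{\GG} \neq 0$, and any nonzero invariant vector $v$ then has $G$-orbit of cardinality at most $[G:\GG] < \infty$, violating (2). The only technical care in the argument is the bookkeeping in the first direction, where one must produce a single finite-index subgroup $\GG$ that simultaneously realizes $\KK$ as $\GG \cap K$ \emph{and} stabilizes the prescribed vector $v$; this is precisely what the intersection $\GG_0 \cap H$ accomplishes, and it is the only place where the characteristicity of $\KK$ is really used.
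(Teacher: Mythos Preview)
Your proof is correct and follows essentially the same route as the paper: both arguments reduce to the observation that a nonzero vector has finite $G$-orbit if and only if it is fixed by a finite-index subgroup, and then identify nontrivial $\GG$-invariants in $H_1(\KK;\Q)$ with excessive homology of $\GG$. The only cosmetic differences are that you invoke the preceding lemma for that identification (the paper re-derives it inside the proof via the symplectic pairing on $H_1(\KK;\Q)$), and you skip the passage to the normal core, which is harmless since any finite-index subgroup with excessive homology already witnesses virtual excessive homology.
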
 

\begin{proof} (1) $\Rightarrow$ (2): Assume that there exists a finite index characteristic subgroup $\KK \unlhd K$, and a nonzero $v \in H_{1}(\KK;\Q)$ with {\em{finite}} orbit $G \cdot v$. As discussed before, $G$ admits a normal finite index subgroup $\GG \unlhd_{f} G$ whose intersection with $K$ is the subgroup $\KK \unlhd K$.
As $G \cdot v$ is finite, there is a finite index subgroup $H \leq  G$ such that the orbit $H \cdot v = \{v\} $. As $\KK$ acts trivially on its own homology, $\KK \leq H$. 
We now replace $\GG$ with its finite index subgroup $H \cap \GG$; namely, we take a finite index subgroup of $\GG$ defined by the pull-back of a suitable finite index subgroup of $\widetilde \G$.
Going to the normal core of $H \cap \GG$ in $G$ if needed, we can assume $H \cap \GG \leq  G$ is normal; this normal core will still contain $\KK$ by the assumption that the latter is characteristic in $K$.  

Hoping that no risk of confusion arises, we maintain the notation $1 \to \KK \to \GG \to {\widetilde \G} \to 1$  for the ensuing finite index normal subgroup of $G$: we stress that  $\KK$ has not changed in the process. With this notation in place, we have that $\GG \cdot v = \{v\}$, whence the space of invariants $H_{1}(\KK;\Q)^{\GG}$ is nontrivial. As $\GG$ acts on $H_{1}(\KK;\Q)$ preserving its intersection form, the symplectic structure on $H_{1}(\KK;\Q)$ induces an isomorphism of $\GG$-spaces between $H_{1}(\KK;\Q)$ and its dual $H^{1}(\KK;\Q)$; consequently, $H^{1}(\KK;\Q)^{\GG} \cong H_{1}(\KK;\Q)^{\GG}$. The latter vector space is nontrivial, as it contains the span of  $v$.
But $H^{1}(\KK;\Q)^{\GG}$ is dual to $H_{1}(\KK;\Q)_{\GG}$ (see e.g. \cite[Lemma 2.1]{PW13}), hence
$\mbox{dim\hspace*{1pt}} H_{1}(\KK;\Q)_{\GG} > 0$.  It follows that $\GG$ has excessive homology. 

(2) $\Rightarrow$ (1): Let $\GG$ be any finite index subgroup of $G$: we want to show that if (2) holds, then $\GG$ has no excessive homology. First, as the existence of excessive homology is preserved by passing to finite index subgroups, we can assume that $\GG$ is normal in $G$. Denote now $\KK = K \cap \GG$. Any surface group $\KK$ contains a finite index subgroup characteristic in $K$. Replacing $\GG$ if necessary with its normal finite index subgroup whose intersection with $\KK$ is that characteristic subgroup (see the Remark in Section \ref{sec:prelim}), we can assume without loss of generality that $\KK$ is characteristic itself in $K$.  
By assumption, for any nonzero $v \in H_{1}(\KK;\Q)$ the orbit $G\cdot v$ is infinite. As $\GG \unlhd G$ is finite index, so must be the orbit $\GG \cdot v$. The space of invariants $H_1(\KK;\Q)^{\GG}$ is trivial, and proceeding as above so is the space of coinvariants $H_1(\KK;\Q)_{\GG}$, hence $\GG$ has no excessive homology. \end{proof}

Next, we show the equivalence of the existence of a surface-by-surface or surface-by-free group with no virtual excessive homology and the case NFO($g,0,0$) (Conjecture 1.2 of \cite{PW13} for $\Sigma_{g,0}^{0}$) of the Putman--Wieland Conjecture.

\mainthm

\begin{proof} Let $G$ be an extension as in the statement.  We claim that for any finite index characteristic subgroup $\KK \leq  K$, and any nonzero $v \in H_{1}(\KK;\Q)$ the orbit $\mod1 \cdot v \subset  H_{1}(\KK;\Q)$ is infinite. Indeed, by Lemma \ref{lem:equi}, we know that for any finite index characteristic subgroup $\KK \leq  K$ and any nonzero $v \in H_{1}(\KK;\Q)$ the orbit $G \cdot v \subset  H_{1}(\KK;\Q)$ is infinite, which shows that the orbit $\mod1 \cdot v \supseteq G \cdot v$ is infinite as well. 

To prove the reverse implication, we recall that, given any finite presentation $\Theta$ of $\Out(K)$, there exists a surface bundle $X$ of fiber $F$ over a surface $B$ (whose genus equals the rank $r$ of the presentation) induced by an epimorphism $\eta \colon \G \to  \modg$. This construction is due to Kotschick in \cite[Proposition 4]{Ko98}; the map $\eta$ is defined by sending the first $r$ generators of $\G = \pi_1(B) = \langle \alpha_1,...,\alpha_{r}, \beta_1,...,\beta_{r}|\prod_{i=1}^{r} [\alpha_i,\beta_i]\rangle$ to the set of generators of $\Out(K)$, while the remaining $r$ generators are sent to the trivial element. We denote this surface bundle by $X_\Theta$ so that $G = \pi_1(X_\Theta)$ will be the desired surface-by-surface group. Similarly, we can consider the presentation epimorphism $\eta \colon \G = F_r \to \modg$, with $G$ being the induced surface-by-free group. By construction, these extensions are of type I in Johnson's trichotomy (namely, the monodromy homomorphisms $\eta \colon \G \to \modg$ have infinite kernel and image, see \cite{Jo93}) and, in the surface-by-surface case $X_\Theta$ has signature zero.  The virtual excessive homology is determined by the behavior of the orbits of $G$ on the homology of the characteristic subgroups of $K$. As $\eta \colon \G \to \modg$ is surjective, so is $\zeta \colon G \to \mod1$, so the $G$-orbits coincide with the orbits of $\mod1$. It follows that NFO($g,0,0$) is true if and only if $G$ has no virtual excessive homology.  Note that as there exists presentations of $\modg$ with two generators (\cite{Wa96,Kor05}), we can assume that $r = 2$, in particular the base $B$ of the surface bundle over a surface can be chosen to have genus $2$.
\end{proof}

It is quite straightforward to see that the result above is optimal as far as the base genus is concerned. In fact we have the following: 

\begin{proposition} \label{pr:torus} Let $F \hookrightarrow X \stackrel{f}{\to} T^2$ be a surface bundle over a torus with fiber genus greater or equal than $2$. Then $G$ has nonzero virtual excessive homology. \end{proposition}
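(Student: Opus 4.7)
Since $\pi_1(T^2) = \Z^2$ is abelian, the monodromy $\eta\co\Z^2\to\modg$ has abelian image $A := \eta(\Z^2)$. I would prove the proposition by splitting into cases on the rank of $A$ as a finitely generated abelian group, freely passing to finite-index subgroups of $\Z^2$ (equivalently, torus covers of $T^2$) throughout.

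If $A$ is finite, replacing $T^2$ by the torus cover dual to $\ker\eta$ trivializes the monodromy, so the corresponding cover of $X$ is diffeomorphic to the product $F\times\widetilde T^2$, which has $b_1 = 2g+2 > 2$, exhibiting virtual excessive homology of dimension $2g \geq 4$. If $A$ has rank $1$, after reparametrizing a basis of $\Z^2$ I may assume $\eta(\gamma_1) = \phi$ has infinite order in $\modg$ and $\eta(\gamma_2) = 1$; the lift of $\gamma_2$ into $\pi_1(X)$ can then be chosen to centralize $K$, so $X$ is diffeomorphic to the product $M_\phi\times S^1$ where $M_\phi$ is the mapping torus of $\phi$ on $F$. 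Since $M_\phi$ is an aspherical $3$-manifold with fibre of genus $\geq 2$, $3$-manifold theory (Agol's theorem for hyperbolic $\phi$, combined with Seifert or graph-manifold arguments if $\phi$ is periodic or reducible) gives arbitrarily large virtual first Betti number, so a finite cover $\widetilde M\to M_\phi$ with $b_1(\widetilde M)\geq 2$ produces the cover $\widetilde M\times S^1$ of $X$ with $b_1\geq 3$.

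If $A$ has rank $2$, by Birman--Lubotzky--McCarthy, after a finite-index refinement, $A$ preserves a canonical reduction system $\mathcal C \subset F$ and decomposes into partial pseudo-Anosovs on components of $F\sm\mathcal C$ together with Dehn twists along curves of $\mathcal C$. If $\mathcal C$ contains a non-separating curve $c$, then $[c]\in H_1(F;\Q)$ is a non-zero $A$-fixed vector, so $G$ already has excessive homology. Otherwise $\mathcal C$ consists of separating curves cutting $F$ into pieces $S_1,\ldots,S_k$; if some $S_j$ has trivial $A$-action, then $H_1(S_j;\Q)\hookrightarrow H_1(F;\Q)$ provides a non-zero $A$-fixed subspace, once again giving excessive homology of $G$ itself. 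A rank count shows that the remaining subcase forces $k = 2$ with $\mathcal C = \{c\}$ a single separating curve and each piece $T_j$ carrying a pseudo-Anosov contribution from exactly one of the two generators of $A$.

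The main obstacle is this last subcase, since unlike the rank~$1$ situation $X$ does not decompose as a product $M\times S^1$. My strategy is to consider the embedded $3$-submanifold $M = f^{-1}(\gamma) \subset X$ for a primitive loop $\gamma\subset T^2$ whose monodromy $\eta(\gamma)$ is pseudo-Anosov on both $T_1$ and $T_2$: then $M$ is a graph manifold built from two hyperbolic pieces glued along a torus and so has arbitrarily large virtual $b_1$ by $3$-manifold theory, producing a characteristic cover $\widetilde F\to F$ and a class $v\in H_1(\widetilde F;\Q)$ fixed by some power of $\eta(\gamma)$. To upgrade this into virtual excessive homology of $\pi_1(X)$, I would exploit the flexibility provided by Lemma~\ref{lem:equi}, which only demands a finite $G$-orbit for $v$ rather than strict invariance under a finite-index subgroup of $\Z^2$: modifying lifts of the complementary $\Z^2$-generator by elements of the finite group $K/\widetilde K$ (acting on $H_1(\widetilde K;\Q)$) should place $v$ in a finite $G$-orbit and thereby complete the proof.
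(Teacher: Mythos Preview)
Your rank-$0$ and rank-$1$ cases are fine, but the rank-$2$ case has two real gaps. First, the assertion that ``a rank count forces $k=2$'' is false: on a genus-$3$ surface cut by two disjoint separating curves into three positive-genus pieces $S_1,S_2,S_3$, pick pseudo-Anosovs $\phi_j$ supported on $S_j$ and set $A=\langle \phi_1\phi_2,\,\phi_2\phi_3\rangle$; this is a rank-$2$ abelian subgroup acting nontrivially on every piece, yet $k=3$. Second, and more seriously, the final bootstrap cannot work as written. The $G$-orbit of $v\in H_1(\widetilde K;\Q)$ is a well-defined set, independent of any choice of lifts; since $K/\widetilde K$ is finite, $G\cdot v$ is finite precisely when some (equivalently, every) lift of \emph{each} $\Z^2$-generator has finite orbit on the finite set $(K/\widetilde K)\cdot v$. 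Replacing a lift of $\gamma'$ by another one changes its action only by an element of the finite group $K/\widetilde K$, so if one lift of $\gamma'$ moves $v$ through infinitely many vectors, they all do. Your construction produces $v$ fixed by a power of one direction $\gamma$ (via the $3$-manifold $f^{-1}(\gamma)$, which incidentally is not a graph manifold but has hyperbolic JSJ pieces) and gives no control whatsoever over the action of the complementary direction; there is no argument that $G\cdot v$ is finite.

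The paper sidesteps the entire case analysis: it quotes \cite{FV13}, where it is shown that $\pi_1(X)$ is large for any surface bundle over $T^2$ with fiber genus at least $2$. Largeness yields $vb_1(X)=\infty$, so some finite regular cover $\widetilde X$ has $b_1(\widetilde X)>2$, and since $\widetilde B$ is again a torus this is already excessive homology. If you wish to salvage your hands-on approach, the rank-$2$ case needs a genuinely new idea that simultaneously controls both $\Z^2$-directions on the homology of a single characteristic cover of $F$; producing invariants for one direction at a time is not enough.
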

\begin{proof} 
	In \cite{FV13} it is proven that the fundamental group of a surface bundle of the type described is large. In particular, this implies that $vb_1(X) = \infty$. Then let $\XX \to X$ be a finite regular cover, as described in the diagram of  \Cref{eq:diag}, with $b_1(\XX) > 2$.  As ${\widetilde B}$ is also a torus, the fibration ${\widetilde F} \hookrightarrow \XX \to {\widetilde  B}$ has excessive homology.
\end{proof} 

In \cite{Mark22} the author proves that NFO($2,0,0$) fails. (As often happens, the case of genus $2$ is special because the mapping class group is entirely composed of hyperelliptic classes.) This entails the following:

\corgen

\begin{proof} The existence of a virtual algebraic fibration for $G$ follows immediately from Theorem \ref{thm:main}. The proof of incoherence is standard, and we reproduce it for completeness. As coherence is preserved by subgroups, we can assume that $G$ itself is algebraically fibered. Let $ \phi \colon G \to \Z$ be an algebraic fibration and let \[ 1 \longrightarrow \ker \,\phi \longrightarrow G \longrightarrow \Z \longrightarrow 1 \] be the corresponding short exact sequence, where $\ker \,\phi \unlhd G$ is finitely generated. By \cite[Theorem 4.5(4)]{Hi02} the group $\ker \,\phi$ can have finiteness of type $FP_2$ (in particular, be finitely presented) only if the Euler characteristic of $G$ is zero. As both base and fiber groups of $G$ have nonzero Euler characteristic, this fails, hence $\ker \,\phi$ is finitely generated but not finitely presented.
\end{proof}

It is quite interesting, at this point, to ask whether there exist classes of surface bundles for which there is always virtual excessive homology, and hence virtual algebraic fibrations. For instance, this is the case when the fibration is a holomorphic bundle, see e.g. \cite{BHPV04}. In particular, it would be interesting to decide this case for the class of Kodaira fibrations, or of surface bundles of type III in Johnson's trichotomy (injective monodromy). (The surface bundles discussed in Theorem \ref{thm:main} cannot be Kodaira fibrations, as these have strictly positive signature, see e.g. \cite{BHPV04}.)

In the case where $g \geq 3$, we can use a result of Korkmaz to give a quite explicit description of the type of surface bundles that are involved in the statement of \Cref{thm:main}. For sake of concreteness, we limit ourselves to the surface-by-free case, that is somewhat more striking. In \cite[Section 5]{Kor05} Korkmaz shows that the mapping class group $\modg$ can be assumed to be generated by two elements, generators of two cyclic subgroups of order $4g + 2$ of $\modg$. These generators are conjugated in $\modg$ by a Dehn twist along a nonseparating curve. We'll denote by $\delta \colon K \to K$ the corresponding automorphism.
We have the following:

\simplenofiber
\begin{proof} The proof of this proposition is a specialization of an argument used in the proof of Theorem \ref{thm:main}. By \cite{Kor05} there exists a presentation of  $\modg$ of rank $2$ in which the two generators $x,y$ each generate a cyclic subgroup of order $4g+2$. All these generators are conjugate in $\modg$ (see e.g. \cite[Section 7.2.4]{FM12}), so that in particular the mapping tori of the induced automorphisms of $K$ are isomorphic. In the case at hand we can assume that the conjugating element is induced by a Dehn twist along a nonseparating curve (see \cite[Section 5]{Kor05}). It follows that there exists a commutative diagram of the form 
\[  \xymatrix{
K \ar@{^{(}->}[r]\ar[d]^{\cong} &
G \ar@{->>}[d]^{\zeta} \ar@{->>}[r]^{f}& F(x,y) \ar@{->>}[d]^{\eta} \\
K \ar@{^{(}->}[r]& \mod1 \ar@{->>}[r]^{p}  &
 \modg } \]
 where $\eta \colon F(x,y) \to \modg$ is the presentation quotient. By \textit{fiat}, $G$ is the free product of the mapping tori of two automorphisms of $K$, amalgamated along $K$. These mapping tori arise as the pull-back of the monodromies determined by the two generators $x$ and $y$ of  $F(x,y)$, namely they are the unique (up to conjugation) cyclic monodromies of order $4g+2$ on $K$. Denoting by $\Pi$ the resulting mapping torus, well-defined up to isomorphism, the group $G$ is isomorphic to the free amalgamated product $\Pi *_{\delta} \Pi$, where the amalgamation is determined by the automorphism $\delta \colon K \to K$. The rest of the proof follows exactly as the proof of \Cref{thm:main}.  \end{proof}

As a remark, note that the five term sequence of the Lyndon--Hochschild--Serre spectral sequence tells us that $H_1(\Pi) = \Z \oplus H_{1}(K)_{\Z_{4g+2}}$ where we make explicit that the periodic monodromy factorizes through the quotient map $\Z \to \Z_{4g+2}$. The action of $\Z_{4g+2}$ on $K$ determines an orbisurface cover (or, if preferred, a branched cover) whose quotient is an orbisphere with $3$ orbifold points of order $2,2g+1, 4g+2$ whose orbifold fundamental group we denote $\Delta$. We have a short exact sequence
\[  1 \longrightarrow K \longrightarrow  \Delta \longrightarrow \Z_{4g+2} \longrightarrow 1.\]
As $\Z_{4g+2}$ is torsion, the coinvariant homology $H_{1}(K)_{\Z_{4g+2}}$ has the same rank as $H_1(\Delta)$, namely it is torsion. It follows that $b_1(\Pi) = 1$ and $b_1(\Pi *_{\delta} \Pi) = 2$.

\section{Extensions with no virtual excessive homology}\label{sec:fbf} 

In this section we will show that there are extensions of free (and free abelian) groups that have no virtual excessive homology. As we already observed, this does not imply an analog of the NFO conjecture in the realm of free groups. However, it is interesting that the proof hinges on Property (T) for suitable automorphism groups, a theme related with the circle of ideas at the origin of \cite{PW13}. Regarding Property (T), we recall the definitions here; for full details see \cite{bdlhv}.   

\begin{definition}\cite[Def 1.1.3, 1.4.3]{bdlhv}
	A group $G$ has {\em  Property (T)}, if every unitary representation of $G$ with almost invariant vectors has a non-trivial invariant vector.  A pair of discrete groups $(G, H)$ with $H\leq G$ has {\em Relative Property (T)}, if every unitary representation of $G$ with almost invariant vectors has a non-trivial $H$ invariant vector. 
\end{definition}

For the precise definition of almost invariant vectors see Definition 1.1 in \cite{bdlhv}.  If $G$ has Property (T), then any quotient of $G$ also has Property (T). As discrete, finitely generated amenable groups with Property (T) are finite, any amenable discrete quotient of $G$ must be finite.  Likewise, if $(G, H)$ has Relative Property (T), then in any amenable discrete quotient of $G$ the image of $H$ must be finite.
This follows since an amenable group $K$ has almost invariant vectors in the left-regular representation on $\mathcal{L}^2(K)$ (Reitner's condition). 
In particular, this holds for abelian quotients, since abelian groups are amenable. 

\freebyfreenv

\begin{proof}  
	In \cite{KNO19, KKN21, Nit20}, it is shown that $\Aut(F_m)$ has property (T) for $m\geq 4$. 
	Thus by \cite[Prop 10]{KNO19} we see that $F_m\rtimes \Aut(F_m)$ has property (T). 
	Thus if $Q$ is an abelian quotient of a finite index subgroup $H$ of $G = F_m\rtimes\Aut(F_m)$, then $Q$ is finite.  In particular, any finite index subgroup of $G$ has finite abelianization. 
	Similarly, the pair $(\Z^m\rtimes SL_m(\Z), \Z^m)$ has Relative Property (T) when $m \geq2$ \cite[Ex. 1.7.4, 4.2.2]{DLHV89}, thus given any abelian quotient $Q$ of a finite index subgroup $H$ of $\Z^m\rtimes SL_m(\Z)$, then $\Z^m\cap H$ has finite image in $Q$.   These are the key properties that make our proof work. 
	
	We start with the case where the fiber $K$ is the free group $F_m$ for $m \geq 4$. Let $\eta \colon \Gamma \to \Aut(F_m)$ be a surjection and build the associated extension $G = F_m \rtimes \G$. Note we can take $\Gamma = F_n, \pi_1(S_g)$ with $g=n=2$ since $Aut(F_m)$ is generated by 2 elements \cite{N33}. Thus we have a commutative diagram 
	\[ \xymatrix{
F_m \ar@{^{(}->}[r]\ar[d]^{\cong} & G \ar@{->>}[d]^{\zeta} \ar@{->>}[r]^{f}& \G \ar@{->>}[d]^{\eta} \\
F_m \ar@{^{(}->}[r]&  F_m\rtimes \Aut(F_m) \ar@{->>}[r]^{\hspace*{10pt} p}  &
 \Aut(F_m) } \]

 The top row of this diagram is split so we have a splitting $s\colon \Gamma \to G$. 
 
	The proof proceeds with a variation on the proof of Theorem \ref{thm:main}, as we need to control the virtual coinvariant homology of $G$ without resorting to the invariant homology. 
	
	Let ${\widetilde G}$ be an arbitrary finite index subgroup of $G$. Then we have the short exact sequence 
	$$ 1 \rightarrow F_k \rightarrow {\widetilde G} \rightarrow {\widetilde \G} \rightarrow 1 $$ 
	where ${\widetilde \G}$ is the image of ${\widetilde G}$ under $f \colon G \rightarrow \G$ and $F_k = {\widetilde G} \cap F_m$. We note that ${\widetilde \Gamma}$ may not stabilize $F_k$ under the action given by $\eta({\widetilde \Gamma})$.  If this is the case, then  $s({\widetilde \Gamma})$ is not contained in  ${\widetilde G}$.  However, $s({\widetilde \G}) \cap {\widetilde G}$ has finite index in $s({\widetilde \G})$.  Therefore we can consider the subgroup ${\widehat G}$ of $G$ generated by $F_k$ and $s(\G)\cap {\widetilde G}$;
	this has finite index in ${\widetilde G}$, hence in $G$ as well. Thus we obtain a commutative diagram as follows:  
	\begin{equation} \label{eq:fbb} \xymatrix{
F_k \ar@{^{(}->}[r]\ar[d]^{\cong} & {\widehat G} \ar@{->>}[d]^{\zeta} \ar@{->>}[r]^{f}& {\widehat \G} \ar@{->>}[d]^{\eta} \\
F_k \ar@{^{(}->}[r]&  H \ar@{->>}[r]^{p}  &
 T } \end{equation} 
 where $H = \zeta({\widehat G})$ is finite index in $F_m \rtimes \Aut(F_m)$; furthermore
 ${\widehat \G} = f(\widehat{G}) = f(s(\G)\cap {\widetilde G})$ is finite index in $\G$ and $T$, the image of $\widehat{\G}$ under $\eta$, is finite index in $\Aut(F_m)$.

By construction, the top horizontal sequence in Eq (\ref{eq:fbb}) splits: any element $\gamma \in f(s(\G) \cap \GG)$ is mapped to $s(\gamma) \in \GG$, which sits by construction in $\widehat{G}$. The image $H$ is also a semidirect product; it is the image of $F_k \rtimes \widehat{\G}$ in $F_m \rtimes Aut(F_m)$ where the action of $\zeta(s(\widehat{\G}))$ on $F_m$ stabilizes $F_k \leq F_m$. 

So, as excessive homology is non-decreasing over finite index subgroups by Lemma \ref{lem:fi}, we can restrict ourselves to the case where the finite index subgroup of $G$ is a semidirect product of the form $\widehat{G} = F_k \rtimes \widehat{\G}$ for some finite index subgroup $\widehat{\G} \leq \G$, and its image $H \leq F_{m} \rtimes \Aut(F_m)$ is an extension of $F_k$ by a finite index subgroup $T \leq \Aut(F_m)$. By Lemma \ref{lem:ehsd} the excessive homology of ${\widehat G}$ is given by the coinvariant homology group $H_1(F_k;\Q)_{\widehat{\G}}$, and as the action of ${\widehat \G}$ on $F_k$ factors through $T$ by construction, this coinvariant homology group coincides with $H_1(F_k;\Q)_T$. By Lemma \ref{lem:ehsd} again, the excessive homology of ${H}$ is given by $H_1(F_k;\Q)_{T}$ as well. 

At this point we can invoke the fact that abelian quotients of any finite index subgroups of $F_m \rtimes \Aut(F_m)$, in particular ${H}$, are finite. This implies that $H_1(H;\Z)$ is torsion, hence $H_1(F_k;\Q)_{T}$ is trivial (and $b_1(\widehat{G}) = b_1(\widehat{\G})$).

	The proof for $K = \Z^m$ is slightly different from the above, although we could have done the proof above using Relative Property T, see remarks below. The pair $(\Z^m\rtimes SL_m(\Z), \Z^m)$ has Relative Property (T) when $m \geq2$ \cite[Ex. 1.7.4, 4.2.2]{DLHV89}, thus given any abelian quotient $Q$ of a finite index subgroup $H$ of $\Z^m\rtimes SL_m(\Z)$, then $\Z^m\cap H$ has finite image in $Q$. 
	
	It is classically known that there exist presentations of $SL_m(\Z)$ with two generators (see \cite{N33}); we can therefore again choose an epimorphism $\eta \colon \G \to SL_m(\Z)$ where $\Gamma$ is either a free or a surface group of rank or genus at least $2$ and consider the extension $G$ defined as the pull-back of the semidirect product $\Z^m\rtimes SL_m(\Z)$ under the projection onto the base. The groups in question fit in the commutative diagram.

\begin{equation} \xymatrix{
\Z^m \ar@{^{(}->}[r]\ar[d]^{\cong} &
G \ar@{->>}[d]^{\zeta} \ar@{->>}[r]^{f}& \G \ar@{->>}[d]^{\eta} \\
\Z^m \ar@{^{(}->}[r]&  \Z^m\rtimes SL_m(\Z) \ar@{->>}[r]^{p}  &
 SL_m(\Z) } \end{equation} 
Given a finite index subgroup $\widehat{G}$ of $G$, we can assume as above that $\widehat{G}$ has the  form $\widehat{K} \rtimes \widehat{\G}$ for some finite index subgroup
 $\widehat{\G} \leq \G$ and finite index $\widehat{K} \leq \Z^m$ (where of course $\widehat{K}$ itself is abstractly isomorphic to $\Z^n$) and, much as above, we get the commutative diagram

\begin{equation} \xymatrix{
\widehat{K} \ar@{^{(}->}[r]\ar@{^{(}->}[d]^{\cong}  & \widehat{G} \ar@{^{(}->}[d]^{\zeta} \ar@{->>}[r]^{f}& \widehat{\G}  \ar@{^{(}->}[d]^{\eta} \\
\widehat{K} \ar@{^{(}->}[r]& H \ar@{->>}[r]^{p}  & \phi(\widehat{\G}) } \end{equation} where $H = \zeta({\widehat{G}}) \leq \Z^m\rtimes SL_m(\Z)$. The horizontal sequence is a split extension, so by Lemma \ref{lem:ehsd} its excessive homology is given by the coinvariant homology of the fiber. We can now proceed as in the previous proof, using Relative Property (T) for the bottom row. 

To complete the proof of the statement, observe that the vanishing of virtual excessive homology for $G$ entails that it does not virtually algebraically fiber, as the BNS invariant of all its covers coincide with that of a free group or a surface, hence it is empty. But this implies by \cite[Theorem 5.3]{K20} that $G$ is not virtually RFRS as for such groups the only obstruction to virtual algebraic fibration is the vanishing of the first $\ell^2$--Betti number, which we have since there is an infinite index normal finitely generated non-trivial subgroup in all of our cases of $G$. 
\end{proof} 

Note, by contrast, that the extensions discussed in Theorem \ref{thm:freebyfreenv} and the potential counterexamples to Putman--Wieland conjecture are virtually residually $p$, in particular virtually residually finite solvable. 

Recall that given a prime $p$, a group is called {\em residually $p$} if the intersection of its $p$- power index normal subgroups is trivial. Residually $p$ groups are residually finite solvable (or RFS).

\begin{lemma}\label{lm:RFS}
	Let $G$ be a group that fits into a short exact sequence $1\to K\to G \stackrel{f}{\to} \G \to 1$. Suppose $K$ has trivial center and $K$, $\G$ are virtually residually $p$, then $G$ is virtually residually $p$.
\end{lemma}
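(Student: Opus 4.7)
The plan is to use the hypothesis $Z(K)=1$ to embed $G$ into a controlled product, and then pass through finite-index subgroups to isolate a residually-$p$ subgroup. The conjugation action of $G$ on $K$ paired with $f$ yields an injection $\Phi\colon G\hookrightarrow\Aut(K)\times\Gamma$, $g\mapsto(c_g,f(g))$: any $g\in\ker\Phi$ lies in $K$ (since $f(g)=1$) and centralizes $K$, hence is in $Z(K)=\{1\}$. Because subgroups of residually-$p$ groups are themselves residually $p$, it suffices to produce a finite-index residually-$p$ subgroup of the image of $G$ inside $\Aut(K)\times\Gamma$.

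I would first choose a characteristic finite-index subgroup $K_0\leq K$ that is residually $p$, for instance the characteristic core of any residually-$p$ finite-index subgroup (which is again residually $p$ as a subgroup of a residually-$p$ group). Being characteristic in $K\lhd G$, $K_0\lhd G$. Set $G_0:=f^{-1}(\Gamma_0)$ for a residually-$p$ finite-index $\Gamma_0\leq\Gamma$. The conjugation action of $G_0$ on the finite group $K/K_0$ has finite image in $\Aut(K/K_0)$; its kernel $G_1\leq G_0$ still has finite index, and $(K\cap G_1)/K_0=Z(K/K_0)$ lies centrally in $G_1/K_0$. Thus $G_1/K_0$ is a central extension of the residually-$p$ group $f(G_1)$ by the finite abelian group $A:=Z(K/K_0)$.

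Such a central extension is virtually residually $p$. Indeed, splitting $A=A_p\oplus A_{p'}$, the class of the extension in $H^2(-;A_{p'})$ is killed on a suitable finite-index subgroup of $f(G_1)$ via a transfer/corestriction argument (using $\gcd(|A_{p'}|,p)=1$ together with residual $p$-ness of the base); over such a subgroup the extension reduces to a central extension by the finite $p$-group $A_p$, which is residually $p$ by explicit construction of $p$-quotients inflated from finite $p$-quotients of the base. Pulling a residually-$p$ finite-index subgroup back to $G_1$ yields $H\leq G$ of finite index with $H/K_0$ residually $p$. To promote this to $H$ itself virtually residually $p$, I would use the characteristic $p$-central filtration $K_0\supseteq K_0^{(1)}\supseteq\cdots$ with $K_0/K_0^{(n)}$ finite $p$-groups and trivial intersection (each $K_0^{(n)}$ normal in $H$): for $1\neq h\in H\setminus K_0$ apply a $p$-quotient of $H/K_0$ directly, while for $h\in K_0\setminus K_0^{(n)}$ pass further to the preimage in $H$ of a Sylow $p$-subgroup of the finite image of $H/K_0$ in $\Aut(K_0/K_0^{(n)})$, so that the action on the finite $p$-group $K_0/K_0^{(n)}$ factors through a $p$-group, and then exhibit a finite $p$-quotient of the resulting extension in which $h$'s image survives. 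The main anticipated obstacle is the cohomological central-extension claim, which requires that the $p'$-part of the class is trivialized over a finite-index subgroup and that the remaining $A_p$-extension is residually $p$; both call for some care with cohomology of residually-$p$ groups with finite coefficients.
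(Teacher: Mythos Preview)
Your opening move---the injection $\Phi\colon G\hookrightarrow \Aut(K)\times\Gamma$, $g\mapsto (c_g,f(g))$, using $Z(K)=1$---is exactly what the paper uses. But the paper then finishes in two lines by invoking Lubotzky \cite{Lub80}: since $K$ is virtually residually $p$, so is $\Aut(K)$. One then passes to a finite-index $\widetilde G\leq G$ on which $\zeta$ lands in a residually-$p$ subgroup $A\leq_f\Aut(K)$ and $f$ lands in a residually-$p$ subgroup of $\Gamma$; for $g\neq 1$ in $\widetilde G$, either $f(g)\neq 1$ (use a $p$-quotient of the base) or $g\in K$ and hence $\zeta(g)\neq 1$ (use a $p$-quotient of $A$). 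That is the whole proof.

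Your alternative route, which tries to avoid Lubotzky, has two genuine gaps. First, in Step~6 the transfer argument runs the wrong direction: for a subgroup $H\leq Q$ of index $p^k$ one has $\mathrm{cor}\circ\mathrm{res}=p^k\cdot\mathrm{id}$ on $H^2(Q;A_{p'})$, and since $p^k$ is a unit on $A_{p'}$ this shows restriction to $p$-power-index subgroups is \emph{injective}, not zero. Residual $p$-ness of $f(G_1)$ supplies only $p$-power-index normal subgroups, so you have no mechanism to kill the $A_{p'}$-part. Worse, the bare statement ``a central extension of a residually-$p$ group by a finite abelian group is virtually residually $p$'' is false: Deligne's non-residually-finite central $\Z/2$-extension of $\mathrm{Sp}_{2n}(\Z)$ (the latter being residually $p$ for every $p$) is a counterexample. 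So your reduction to $G_1/K_0$ has discarded exactly the structure (the embedding into $\Aut(K)$) that makes the lemma true.

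Second, in Step~8 the subgroup $H_n$ you pass to depends on $n$, hence on the element $h$ you are trying to separate. Exhibiting, for each $h$, \emph{some} finite-index subgroup with a $p$-quotient detecting $h$ does not produce a single finite-index subgroup that is residually $p$. Making this uniform is again essentially the content of Lubotzky's theorem (that the automorphism group of a finitely generated residually-$p$ group is virtually residually $p$), so you are circling back to the result you set out to avoid. The efficient fix is simply to cite \cite{Lub80} after your first paragraph and conclude as the paper does.
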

\begin{proof}
	Without loss of generality, we can assume that $\G$ is already residually $p$. By \cite{Lub80}, we have that if $K$ is virtually residually $p$, then so is $\Aut(K)$. 
	Let $\zeta \colon G\to \Aut(K)$ be the representation of $G$ covering the monodromy map $\eta \colon \G \to \Out(K)$ as in \Cref{eq:bir}. 
	Since $K$ has trivial center, $\zeta$ is injective when restricted to $K$.
	Let $A \leq  \Aut(K)$ be a finite index subgroup of $\Aut(K)$ which is residually $p$.

	Let $\GG = \zeta^{-1}(A)\cap G \leq  G$.
	There is a short exact sequence $1\to \KK\to \GG\to \Ga\to 1$, where $\KK$ is a finite index subgroup of $K$. 
	
	Let $g\in \GG \smallsetminus\{e\}$. If $f(g)$ is non-trivial then we can find a $p$-group quotient of $\Ga$ where the image of $f(g)$ is non-trivial. Thus we have a $p$-group quotient of $\GG$ where the image of $g$ is non-trivial. 
	
	If $f(g) = e$, then $g\in \KK$. 
	In this case $\zeta(g)$ is non-trivial. 
	Thus we can find a $p$-group quotient of $A$, hence of $\GG$, under which the image of $\zeta(g)$ is non-trivial. 
	Thus in either case, we can find a $p$-group quotient where the image of $g$ is non-trivial and hence $\GG$ is residually $p$ and $G$ is virtually residually $p$.  
\end{proof}

\begin{remarks} \label{rem:rmks}
\begin{enumerate}
\item The core of the proof of Theorem \ref{thm:freebyfreenv} shows in more generality that if \label{cor:relT}
$N$ is a finitely generated group such that $\Aut(N)$ is finitely generated, and $(N\rtimes \Aut(N), N)$ has Relative Property (T), there exists a group of the form  $N\rtimes \G$ which does not algebraically fiber, where $\G$ can a be non-abelian free group or a surface group of sufficient rank. 
\item  With the result \cite[Theorem 6.1]{KW19} that $F_2 \rtimes F_n$ groups virtually algebraically fiber, we are left with the case of whether or not all groups of the form $F_3 \rtimes F_n$ virtually algebraically fiber. It is known that $\Aut(F_3)$ is large and hence does not have property (T). However it may be the case that $(F_3\rtimes \Aut(F_3), F_3)$ has Relative Property (T) and then, as remarked above, the same argument as in Theorem \ref{thm:freebyfreenv} could be applied. 
\end{enumerate}
\end{remarks}

Let $G$ be a group as in Equation (\ref{eq:ses}). In light of Kielak's result on virtual fibering \cite{K20} and the fact that $G$ has an infinite normal infinite-index subgroup (hence its first $L^2$ Betti number vanishes), $G$ would virtually algebraically fiber if $G$ is virtually RFRS. Although we know that many of these groups virtually algebraically fiber, we do not know if all of these are RFRS. More broadly, excluding the cases where the monodromy is finite (hence the extension is virtually a product), or the extension does not admit virtually excessive homology, we don't know if these virtually RFS (residually finite solvable) groups are virtually RFRS  (residually finite {\it rationally} solvable) or not. This is exemplified by the following  question:
\begin{question} Which free-by-free groups with nonabelian base and fiber and infinite monodromy are virtually RFRS? \end{question} 

In the hyperbolic case, cubulation would imply this.  Some surface-by-free groups were cubulated in \cite{MMS}, but other cases are widely unknown.  Therefore the following is of interest: 

\begin{question} Which (if any)  hyperbolic free-by-free groups are cubulated? \end{question} 

There are some homological consequences for extensions, such as those identified in Theorem \ref{thm:freebyfreenv}, which do not have virtually excessive homology (or equivalently are not virtually algebraically fibered).

\begin{proposition} Let $G$ be a surface-by-surface, a surface-by-free, or a free-by-free group without virtually excessive homology. Then for any $n \in \N$ and any prime $p$, there is a subgroup $\GG \leq  G$  of finite index at least $n$ such that $H_1(\GG)$ has nontrivial $p$-torsion. \end{proposition}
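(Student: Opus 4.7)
The plan is to produce, for each $n$ and each prime $p$, an explicit normal finite-index subgroup $\GG \unlhd_f G$ of index at least $n$ in which $H_1(\GG;\Z)$ visibly carries $p$-torsion. The engine is a single five-term exact sequence in $\F_p$-homology: the hypothesis of no virtual excessive homology pins down the rational rank $b_1(\GG)$ while a judicious choice of $\GG$ will inflate $\dim_{\F_p} H_1(\GG;\F_p)$ above it, and the Universal Coefficient Theorem will then translate this gap into genuine $p$-torsion.

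For the setup, I would first observe that the action of $\Gamma$ on the finite group $H_1(K;\F_p) \cong \F_p^{b_1(K)}$ factors through the finite group $\operatorname{GL}(b_1(K),\F_p)$, so its kernel $\Ga_0 \unlhd_f \Gamma$ is of finite index. Since $\Ga_0$ is itself a nonabelian surface or free group, it is residually finite (and in fact residually $p$, consistently with \Cref{lm:RFS}), so one can pick a normal finite-index subgroup $\Ga \unlhd_f \Gamma$ with $\Ga \subseteq \Ga_0$ and $[\Gamma:\Ga] \geq n$. Setting $\GG := f^{-1}(\Ga) \unlhd_f G$, we obtain $[G:\GG]=[\Gamma:\Ga]\geq n$ together with the extension
\[
1 \longrightarrow K \longrightarrow \GG \longrightarrow \Ga \longrightarrow 1,
\]
in which the conjugation action of $\GG$ on $H_1(K;\F_p)$ is trivial by construction.

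The five-term exact sequence with $\F_p$-coefficients attached to this extension reads
\[
H_2(\Ga;\F_p) \longrightarrow H_1(K;\F_p)_{\GG} \longrightarrow H_1(\GG;\F_p) \longrightarrow H_1(\Ga;\F_p) \longrightarrow 0.
\]
Triviality of the action gives $H_1(K;\F_p)_{\GG} = H_1(K;\F_p)$, of $\F_p$-dimension $b_1(K)$, while $\dim_{\F_p} H_2(\Ga;\F_p)\leq 1$ (equal to $1$ when $\Ga$ is a closed surface group, $0$ when $\Ga$ is free). A dimension count therefore yields
\[
\dim_{\F_p} H_1(\GG;\F_p)\;\geq\; b_1(\Ga) + b_1(K) - 1.
\]
The hypothesis of no virtual excessive homology forces $b_1(\GG)=b_1(\Ga)$, and the nonabelianness of $K$ gives $b_1(K)\geq 2$. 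Hence $\dim_{\F_p} H_1(\GG;\F_p) > b_1(\GG)$, and Universal Coefficients identifies the surplus with the $\F_p$-dimension of the $p$-torsion subgroup of $H_1(\GG;\Z)$, completing the argument.

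The only genuine subtlety is the choice of $\GG$. The first instinct is to pass to a proper characteristic subgroup $\KK \leq_f K$ on which some $\Ga \leq_f \Gamma$ acts trivially, and then to find a lift $\GG \leq_f G$ with $\GG\cap K=\KK$; but producing such a lift amounts to splitting the extension $f^{-1}(\Ga)/\KK \twoheadrightarrow \Ga$, which in general fails. Taking $\KK = K$, as above, sidesteps the splitting problem entirely, at the sole cost of requiring $\Ga$ to act trivially on the finite vector space $H_1(K;\F_p)$ --- a requirement automatically satisfied by the kernel of a representation into a finite general linear group. \Cref{lm:RFS} is not strictly needed to conclude, but allows one to first replace $G$ by a residually-$p$ finite-index subgroup and thus locate the torsion coherently within the residually-$p$ framework of the surrounding discussion.
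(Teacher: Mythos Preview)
Your proof is correct and takes a genuinely different route from the paper's. The paper invokes \Cref{lm:RFS} in an essential way: it replaces $G$ by a residually $p$ finite-index subgroup, takes a $p$-central filtration $\{G_i\}$, and then for each sufficiently large $i$ exhibits an explicit nontrivial $p$-torsion class in $H_1(G_i)$, namely the image of any $\kappa \in K \cap (G_i \setminus G_{i+1})$; this class is nonzero because it maps nontrivially to the $p$-group $G_i/G_{i+1}$, and it is torsion because the image of $H_1(K_i)_{\G_i}$ in $H_1(G_i)$ is torsion by the no-excessive-homology hypothesis. Your argument, by contrast, bypasses residual $p$-ness entirely: you kill the mod-$p$ monodromy by passing to the kernel of $\G \to \GL(H_1(K;\F_p))$, and then a straight dimension count in the $\F_p$ five-term sequence, together with $b_1(\GG)=b_1(\Ga)$ and $b_1(K)\geq 2$, forces $\dim_{\F_p} H_1(\GG;\F_p) > b_1(\GG)$. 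This is more elementary and self-contained (Lubotzky's result on $\Aut(K)$ is not needed), though it only certifies the existence of $p$-torsion rather than naming a specific torsion class. Your closing paragraph correctly identifies that \Cref{lm:RFS} is superfluous for your approach.
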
 
\begin{proof} 
	By \Cref{lm:RFS}, the group $G$ is virtually residually $p$. 
	Because of the form of the statement, it is not restrictive to assume that $G$ itself is residually $p$. 
	Since $G$ is residually $p$, there exists a filtration $\{G_{i}\mid i \geq 0\}$ of finite index normal subgroups whose index is a power of $p$ with $\bigcap_{i} G_{i} = \{1\}$ and where the successive quotient maps $\a_{i}\colon  G_{i} \to G_i/G_{i+1} = S_i$  factorize through the maximal abelian quotient: 
	\begin{equation} \label{eq:rfs} \xymatrix@=4pt{ 
	& & G_{i+1} \ar@{^{(}->}[rr] & & G_{i} \ar[dr]  \ar@{->>}[rr]^{\a_{i}} & & S_i  & & 
		\\  & & & & & H_1(G_{i}) \ar[ur] &  &} \end{equation}
	Now let $\kappa \in K$ be an element with the property that $\kappa \in G_i \setminus G_{i+1}$;  denoting $K_i = K \cap G_i$ and combining the diagram in \Cref{eq:diag} with the residually $p$ assumption, we get the diagram
	
	\[ \xymatrix@=9pt{ 
		& & K_i \ar@{^{(}->}[rr]\ar[dd]_{\a_i} \ar[dr] & & G_i \ar'[d][dd]\ar[dr]  \ar@{->>}[rr]^{f_i} & & \G_i \ar[dr]\ar'[d][dd] & & 
		\\ & & & H_1(K_i)_{\G_i} \ar[dl] \ar[rr]_{\hspace{-2mm} \a_i} & & H_1(G_i) \ar@{->>}[rr]\ar[dl] &  & H_1(\G_i) \ar[dl] & &
		\\ &  & \a_{i}(K_{i}) \ar@{^{(}->}[rr] & & S_{i}  \ar@{->>}[rr] & & S_i/\a_{i}(K_{i}) & & } \]
	As $b_1(G_i) = b_1(\G_i)$, the image of $H_1(K_i)_{\G_i}$ in $H_1(G_i)$ is a torsion subgroup. And as $\a_i(\kappa) \neq 1 \in S_i$, the class $[\kappa] \in H_1(G_i)$ is nonzero, hence the torsion subgroup is nontrivial. 
	Moreover, $\a(\kappa)$ is non-trivial in $S_i$ and has order $p^l$ for some $l$. 
	We can consider the class $[\kappa]$ in $H_1(G_i)$, this has finite order and maps onto $\a(\kappa)$, thus we see that $[\kappa]$ has order $p^lr$ for some $r$. 
	We conclude that $H_1(G_i)$ contains an element of order $p$. 
\end{proof}


\end{document}